\newtheorem{theorem}{Theorem}[section]
\newtheorem{proposition}[theorem]{Proposition}
\newtheorem{lemma}[theorem]{Lemma}
\def\qed{\hfill $\Box$\medskip}
\def\IC{{\mathbb C}}
\def\IR{{\mathbb R}}
\def\a{{\alpha}}
\def\b{{\beta}}
\def\cH{{\cal H}}
\def\Ra{{\Rightarrow}}
\def\[{\left [}
\def\]{\right ]}
\def\({\left (}
\def\){\right )}
\def\Ra{{\ \Rightarrow\ }}
\def\Lra{{\ \Leftrightarrow\ }}
\def\dfrac{\displaystyle\frac}
\def\IC{{\mathbb C}}
\def\IR{{\mathbb R}}
\def\cH{{\cal H}}
\def\Ra{{\Rightarrow}}
\def\[{\left [}
\def\]{\right ]}
\def\({\left (}
\def\){\right )}
\def\Ra{{\ \Rightarrow\ }}
\def\Lra{{\ \Leftrightarrow\ }}
\def\dfrac{\displaystyle\frac}
\begin{document}
\openup 1\jot
\title{
Perturbing eigenvalues of non-negative matrices
}

\author{Chi-Kwong Li${}^{1}$, Yiu-Tung Poon${}^{2}$, and Xuefeng Wang${}^{3}$ \\
${}^{1}$Department of Mathematics, College of William and Mary,  \\
Williamsburg, VA 23187, USA. (ckli@math.wm.edu)\\
${}^{2}$Department of Mathematics, Iowa State University, \\
Ames, IA 50011, USA. (ytpoon@iastate.edu) \\
${}^{3}$Department of Mathematics,
Ocean University of China, \\
Qingdao,
Shandong 266100, China. (wangxuefeng@ouc.edu.cn)}

\date{}

\maketitle

\begin{center}
{\bf Dedicated to Professor Hans Schneider.}
\end{center}

\begin{abstract}
Let $A$ be an irreducible (entrywise) nonnegative $n\times n$ matrix with
eigenvalues
$$\rho, b+ic,b-ic, \lambda_4,\cdots,\lambda_n,$$
where  $\rho$ is the Perron eigenvalue.
It is shown that for any $t \in [0, \infty)$
there is a nonnegative matrix with eigenvalues
$$\rho+ \tilde t,\lambda_2+t,\lambda_3+t, \lambda_4 \cdots,\lambda_n,$$
whenever $\tilde t \ge \gamma_n t$ with
$\gamma_3=1, \gamma_4 = 2, \gamma_5=\sqrt 5$ and
$\gamma_n = 2.25$ for $n \ge 6$.
The result improves that of Guo et al. Our proof
depends on an auxiliary result in geometry asserting
that the area of an $n$-sided convex polygon is
bounded by $\gamma_n$ times the maximum area of the triangle
lying inside the polygon.
\end{abstract}

{\small
\medskip
{\em 2000 Mathematics Subject Classification}. 15A48, 15A18.

\medskip
{\em Key words and phrases}. Non-negative matrices, Perron eigenvalue, perturbation.}

\section{Introduction}

The {\em nonnegative inverse eigenvalue problem} concerns the
study of necessary and sufficient conditions for a given set of
complex numbers $\lambda_1, \dots, \lambda_n$ to be the eigenvalues of
an (entrywise) nonnegative matrix.
This problem has attracted the attention of many authors, and is still open;
for example, see \cite{Eg} and its references.
In connection to this study, researchers study the change of the Perron eigenvalue
under the perturbation of the other real or complex eigenvalues of a given
nonnegative matrix. Here are several results in this direction.

\begin{itemize}
\item[1.] In \cite{Guo:97}, the author proved the following:

{\it Suppose $\rho,\lambda_2,\lambda_3,\cdots,\lambda_n$ are the eigenvalues
of an $n\times n$ nonnegative matrix $A$ such that
$\rho$ is the Perron eigenvalue, and  $\lambda_2$ is real.
Then for any $0 \le t \le \tilde t$,
there is a nonnegative matrix with eigenvalues
$\rho+ \tilde t,\lambda_2\pm t,\lambda_3,\cdots,\lambda_n$.}

\item[2.] Laffey \cite{Laffey:05} and Guo et al. \cite{GuoGuo:07}
obtained the following
independently:

{\it Suppose $\rho,\lambda_2,\lambda_3,\cdots,\lambda_n$ are the eigenvalues
of an $n\times n$ nonnegative matrix $A$ such that
$\rho$ is the Perron eigenvalue,
and $(\lambda_2, \lambda_3) = (b+ic,b-ic)$ is a
(non-real) complex conjugate pair.
Then for any $\tilde t, t \in [0, \infty)$
with $2t \le \tilde t$,
there is a nonnegative matrix with eigenvalues
$\rho+ \tilde t,\lambda_2- t,\lambda_3-t, \lambda_4 \cdots,\lambda_n$.}

\item[3.]
In \cite[Proposition 3.1]{GuoGuo:07}, the authors further showed that:

{\it Suppose $\rho,\lambda_2,\lambda_3,\cdots,\lambda_n$ are the eigenvalues of
an $n\times n$ nonnegative matrix $A$ such that
$\rho$ is the Perron eigenvalue,
and $(\lambda_2, \lambda_3) = (b+ic,b-ic)$ is a
(non-real) complex conjugate pair.
Then for any $\tilde t, t \in [0, \infty)$
with $4t \le \tilde t$,
there is a nonnegative matrix with eigenvalues
$\rho+\tilde t,\lambda_2+t,\lambda_3+t, \lambda_4 \cdots,\lambda_n$.}
\end{itemize}

\medskip\noindent
The results in (1) and (2) above were shown to be optimal in the sense that
the conclusion may fail if $\tilde t < t$ in (1) and $\tilde t < 2t$ in (2).
However, the result in (3) may be strengthened.
In this paper, we improve the third result, and prove the following.

\begin{theorem}\label{thm:MainResult}
 Suppose
 $\rho,\lambda_2,\lambda_3,\cdots,\lambda_n$ are the eigenvalues of
an $n\times n$ nonnegative matrix $A$ such that
$\rho$ is the Perron eigenvalue, and  $\lambda_2 = b+ic$ and
$\lambda_3 = b-ic$ are (non-real) complex conjugate pairs.
Then for any $t \in [0, \infty)$
there is a nonnegative matrix with eigenvalues
$$\rho+ \tilde t,\lambda_2+t,\lambda_3+t, \lambda_4 \cdots,\lambda_n,$$
whenever $\tilde t \ge \gamma_n t$ with
$\gamma_3=1, \gamma_4 = 2, \gamma_5=\sqrt 5$ and
$\gamma_n = 2.25$ for $n \ge 6$.
\end{theorem}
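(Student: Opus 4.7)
\medskip

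\noindent\emph{Proof proposal.} The plan is to obtain the required matrix as a nonnegative rank-$3$ perturbation of $A$ whose effect on the spectrum is controlled by the rank-$r$ form of Brauer's theorem (due to Rado). Let $u$ be the entrywise positive Perron eigenvector of $A$ and let $w = R + iI$ be an eigenvector associated with the eigenvalue $b + ic$. Then the real $n \times 3$ matrix $X = [\,u \mid R \mid I\,]$ satisfies $AX = XD$ with
$$D \;=\; \diag\(\rho,\ \begin{pmatrix} b & c \\ -c & b\end{pmatrix}\).$$
By the Brauer--Rado theorem, for every real $3 \times n$ matrix $C$ the matrix $A + XC$ has spectrum $\operatorname{spec}(D + CX) \cup \{\lambda_4,\dots,\lambda_n\}$. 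Thus the task reduces to choosing $C$ such that (i) $XC \ge 0$ entrywise, ensuring $A + XC$ remains nonnegative, and (ii) $D + CX$ has eigenvalues $\rho + \tilde t$ and $(b+t) \pm ic$. The most transparent way to impose (ii) is to demand $CX = \diag(\tilde t, t, t)$, since then the lower $2 \times 2$ block of $D + CX$ is $\begin{pmatrix} b+t & c \\ -c & b+t \end{pmatrix}$ with eigenvalues $(b+t) \pm ic$.

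These two conditions can be recast as a planar geometry problem. Since $u$ has positive entries, normalize row by row: set $\a_i = R_i/u_i$, $\b_i = I_i/u_i$, and form the $n$ planar points $p_i = (\a_i, \b_i)$ with convex hull $P = \conv\{p_1,\dots,p_n\}$. Writing $\gamma_j \in \IR^3$ for the $j$-th column of $C$, entry $(i,j)$ of condition (i) reads
$$(\gamma_j)_1 + \a_i (\gamma_j)_2 + \b_i (\gamma_j)_3 \;\ge\; 0,$$
which says that each $\gamma_j$ defines a half-plane containing $P$. Condition (ii) becomes a set of weighted moment equalities relating the $\gamma_j$ to the coordinates $(\a_i, \b_i)$. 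A direct computation --- which I expect to parallel the familiar rank-$1$ Brauer calculation --- identifies the minimum of $\tilde t / t$ over all admissible $C$ with the purely geometric quantity $\mathrm{area}(P)/\max\{\mathrm{area}(T)\}$, where $T$ runs over triangles with vertices at three of the $p_i$. The extremal $C$ is supported on the three indices realising a maximum-area inscribed triangle.

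With the reduction in hand, Theorem~\ref{thm:MainResult} follows from the auxiliary polygon lemma advertised in the abstract:
$$\mathrm{area}(P) \;\le\; \gamma_n \cdot \max\{\mathrm{area}(T) : T \text{ is a triangle with vertices among those of } P\},$$
with $\gamma_3 = 1$, $\gamma_4 = 2$, $\gamma_5 = \sqrt{5}$, and $\gamma_n = 2.25$ for $n \ge 6$. The cases $n = 3$ and $n = 4$ are elementary: a triangle is its own inscribed triangle, and a diagonal of a quadrilateral produces two triangles whose larger has area at least $\mathrm{area}(P)/2$. The cases $n = 5$ and $n \ge 6$ should reduce, after affine normalization, to finite extremal problems on the pentagon and hexagon respectively.

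The step I expect to be the main obstacle is the geometric reduction itself: showing that the smallest ratio $\tilde t / t$ achievable by an admissible $C$ equals exactly $\mathrm{area}(P)$ divided by the maximum inscribed-triangle area. The nonnegativity condition is a family of half-plane constraints parametrised by the columns of $C$, while the spectral condition imposes linear moment equalities on those same columns; identifying the extremal $C$ with a maximum-area inscribed triangle is the heart of the argument. Once this identification is in place, the remainder is the planar polygon lemma, which is self-contained and independent of the matrix $A$.
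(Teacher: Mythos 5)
Your overall strategy---a rank-$3$ Brauer--Rado perturbation $A+XC$ with $X=[\,e\,|\,u\,|\,v\,]$ (after normalizing the Perron vector), $CX=\diag(\tilde t,t,t)$, and a reduction to a planar area ratio---is exactly the paper's strategy. But the step you yourself flag as the ``main obstacle'' is left entirely unproven, and as you have formulated it, it is not merely unproven but wrong for $n\ge 7$. You claim the minimal admissible $\tilde t/t$ equals $\mathrm{area}(P)/\max\{\mathrm{area}(T)\}$ where $P$ is the convex hull of \emph{all} $n$ points $p_i$. For $n\ge 7$ that ratio can exceed $2.25$ (already for the regular heptagon it is about $2.30$), so the advertised polygon lemma, which is only stated and proved for $n\le 6$, could not close the argument. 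The bound $\gamma_n=2.25$ for all $n\ge 6$ cannot come from the full convex hull.

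The missing idea is that one never needs more than six of the points. The paper chooses the three columns of $C$ to be supported on the indices $1,2,3$ of a \emph{maximum-area} triangle $P_1P_2P_3$, taking $x,y$ dual to $u,v$ on those coordinates. The nonnegativity of $A+XC$ in row $l$ then reduces to $\alpha_{l1}\ge\alpha_{i1}$, $\alpha_{l2}\ge\alpha_{j2}$, $\alpha_{l3}\ge\alpha_{k3}$ plus $\delta\ge 0$, where $i,j,k$ are the three indices minimizing $\alpha_{l1},\alpha_{l2},\alpha_{l3}$ respectively; the first family of inequalities is automatic by the choice of $i,j,k$, and $\delta\ge0$ amounts to $\tilde t\ge -t(\alpha_{i1}+\alpha_{j2}+\alpha_{k3})$. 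The maximality of $\det(1,2,3)$ forces each of $P_i,P_j,P_k$ into the reflected ``ear'' region across one side of $P_1P_2P_3$, so that $P_1P_kP_2P_iP_3P_j$ is a convex hexagon whose area is the max triangle plus the three ears; the hexagon case of the polygon proposition then gives $-(\,\alpha_{i1}+\alpha_{j2}+\alpha_{k3})\le 9/4$. Note also that the paper only needs \emph{one} admissible $C$ achieving ratio $\le\gamma_n$, not your stronger (and unsubstantiated) claim that the infimum over all admissible $C$ equals the area ratio. Finally, your normalization $p_i=(R_i/u_i,I_i/u_i)$ presupposes a strictly positive Perron vector; the paper avoids this by first invoking Johnson's lemma to pass to a realizing matrix with constant row sums, so that $e$ is the Perron vector.
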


Our proof depends on the following geometrical result, which is of independent
interest.

\begin{proposition} \label{Hexagon} Suppose $n\in \{3,4,5,6\}$.
The area of an $n$-sided convex hexagon ${\cal P} \subseteq \IR^2$
is bounded by $\gamma_n$ times
the maximum area of the triangles lying inside ${\cal P}$, where

\medskip\centerline{
$\gamma_3=1, \ \gamma_4 = 2, \ \gamma_5=\sqrt 5, \ \gamma_6 = 2.25$,}

\medskip\noindent
and these bounds are best possible.
\end{proposition}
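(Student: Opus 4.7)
The plan is to exploit the classical fact that a maximum-area triangle inscribed in a convex polygon has all three vertices among the polygon's vertices (any vertex of such a maximizer must be extreme in the direction orthogonal to the opposite side); I will denote such a maximizer by $T^\star$ and its area by $\tau$. This reduces the problem to triangles spanned by triples of vertices of $\mathcal{P}$. Since both $\mathrm{Area}(\mathcal{P})$ and $\tau$ scale identically under affine transformations, I will normalize $T^\star$ to the standard simplex $\{(0,0),(1,0),(0,1)\}$ whenever convenient.

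The cases $n=3,4$ are elementary: for $n=3$ the polygon is $T^\star$, and for $n=4$ any diagonal of $\mathcal{P}$ splits it into two triangles of area at most $\tau$, giving $\mathrm{Area}(\mathcal{P})\le 2\tau$ with equality attained by any parallelogram.

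For $n\in\{5,6\}$, I will classify the cyclic ``arc pattern'' describing how the three vertices of $T^\star$ interleave with the remaining $n-3$ vertices: the possibilities are $(2,0,0)$ and $(1,1,0)$ for $n=5$, and $(3,0,0)$, $(2,1,0)$, $(1,1,1)$ for $n=6$. After normalizing $T^\star$ and placing the extra vertices in their respective external regions, $\mathrm{Area}(\mathcal{P})$ becomes $\tau$ plus three explicit ``flap'' contributions, while the requirement that no inscribed triangle exceed area $\tau$ yields at most $\binom{n}{3}$ polynomial inequalities in the coordinates of the extras. The resulting constrained optimization can then be solved by the Lagrange--KKT conditions. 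For $n=5$ the extremum is expected in the $(1,1,0)$ pattern at (up to affine equivalence) the regular pentagon, with three binding triangle constraints and golden-ratio coordinates, yielding $\gamma_5=\sqrt{5}$.

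The hardest step will be the $n=6$ case: after normalization there are six free parameters and up to twenty inscribed-triangle inequalities, and I must verify that the optimum in one arc-pattern case dominates those in the other two. The naive triangulation bounds obtained from the $n=4$ and $n=5$ cases give only $\le 4\tau$ and $\le (1+\sqrt{5})\tau$ respectively, so the crux of the argument lies in exploiting additional constraints from inscribed triangles that straddle different arcs---these balance the three ``flap'' areas and recover the sharp bound $\gamma_6=9/4$, together with an explicit extremal hexagon realizing the equality.
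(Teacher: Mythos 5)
Your overall strategy coincides with the paper's: reduce to triangles spanned by vertices, affinely normalize the maximal triangle, classify by how the remaining $n-3$ vertices are distributed among the three arcs (equivalently, how many sides the maximal triangle shares with the polygon), and maximize the sum of the ``flap'' areas subject to the inscribed-triangle constraints. The cases $n=3,4$ and the patterns $(2,0,0)$, $(3,0,0)$ are disposed of the same way in the paper and are fine. The gap is that for $n=5$ and, crucially, $n=6$ you have only \emph{stated} the constrained optimization, not solved it, and for this proposition that solution is the proof. For $n=5$ the paper reduces the $(1,1,0)$ pattern to maximizing $u_3+u_5+v_3$ subject to $v_3(1+u_5)-(1-u_3)v_5\le 1$ plus box constraints, and an explicit elimination ($v_5\to v_3$, then optimizing over $v_3$) yields $\sqrt5$; saying the extremum is ``expected'' at the regular pentagon is not an argument, even though the value is correct. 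For $n=6$ the KKT analysis you defer is genuinely delicate: the paper first needs a separate perturbation lemma to rule out the $(1,1,1)$ pattern (if the maximal triangle is $P_1P_3P_5$, first-order optimality of the flap sum forces relations among the coordinates under which the three flaps total at most the central triangle, so the hexagon has area $\le 2\tau$), and the surviving $(2,1,0)$ case is then split into three auxiliary maximization problems according to the ordering of $v_1,v_2,v_3$, reduced to the middle one by monotone reparametrizations, and finished by a further three-way analysis of which constraints are active, each branch ending in a one- or two-variable calculus computation. Your proposal gives no indication of how the twenty inscribed-triangle inequalities are to be pruned to the few that actually bind, which is exactly where the difficulty lies.

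A second, smaller omission concerns sharpness for $n=6$: you must exhibit a hexagon attaining $9/4$, and it cannot be found by a symmetry ansatz---the affinely regular hexagon gives ratio only $2$, as the paper explicitly remarks. The extremal example is the asymmetric hexagon with vertices $(0,0)$, $(1,0)$, $(5/6,2/3)$, $(0,1)$, $(-1/4,1)$, $(-2/3,2/3)$, whose largest inscribed triangle has area $1/2$; without producing such a configuration (and likewise the explicit pentagon for $n=5$) the claim that the bounds are best possible is unsupported.
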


One easily sees that the maximum area of the triangles lying inside
a convex polygon is attained at a triangle formed by
3 of the vertices of the polygon.

The proof of Theorem \ref{thm:MainResult}
is given in Section 2, and the technical
proof of Proposition \ref{Hexagon} and some remarks are
given in Section 3.


\section{Proof of Theorem \ref{thm:MainResult}}

We begin with two lemmas. The first one can be found in \cite{Johnson:81}.

\begin{lemma}\label{lem:Johnson81}
Suppose $\lambda_1, \dots, \lambda_n$ are the eigenvalues of
a nonnegative matrix. Then there is  a nonnegative matrix with constant
row sums with eigenvalues $\lambda_1, \dots, \lambda_n$.
\end{lemma}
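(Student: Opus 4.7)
The plan is to prove the lemma by induction on the matrix size $n$. The base case $n=1$ is immediate. For the inductive step, let $A$ be any $n\times n$ nonnegative matrix realizing the prescribed spectrum, and let $\rho$ denote its Perron eigenvalue. By the Perron--Frobenius theorem there exists a nonnegative eigenvector $u\ge 0$, $u\ne 0$, with $Au=\rho u$.

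The key observation is that when $u$ is strictly positive, setting $D=\diag(u)$ gives $B = D^{-1}AD$, which is again nonnegative (each entry $d_i^{-1}a_{ij}d_j$ is nonnegative), is similar to $A$, and satisfies $Be = D^{-1}Au = \rho D^{-1}u = \rho e$. Hence $B$ shares the eigenvalues of $A$ and has every row sum equal to $\rho$. This already handles the case in which $A$ is irreducible.

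If instead $u$ has zero coordinates, I apply a permutation similarity (which preserves nonnegativity and the spectrum) to bring $u$ into the form $u = \begin{pmatrix} u_1 \\ 0\end{pmatrix}$ with $u_1 > 0$. Partitioning $A = \begin{pmatrix} A_{11} & A_{12}\\ A_{21}& A_{22}\end{pmatrix}$ conformably, the equation $Au = \rho u$ yields $A_{11}u_1 = \rho u_1$ and $A_{21}u_1 = 0$; the second relation forces $A_{21}=0$ since $A_{21}\ge 0$ and $u_1 > 0$. Thus $A$ is block upper triangular, its eigenvalues are those of $A_{11}$ together with those of $A_{22}$, and in particular $\rho_2 := \rho(A_{22})\le \rho$. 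Applying the previous paragraph to $A_{11}$ (whose Perron root is $\rho$ with positive eigenvector $u_1$) produces a nonnegative $B_{11}$ with constant row sum $\rho$ and the same eigenvalues as $A_{11}$; applying the inductive hypothesis to the strictly smaller $A_{22}$ produces a nonnegative $B_{22}$ with constant row sum $\rho_2$ and the same eigenvalues as $A_{22}$.

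To finish, I assemble
\[
B=\begin{pmatrix} B_{11} & 0 \\ Y & B_{22}\end{pmatrix},
\]
where $Y$ is any nonnegative rectangular block whose rows all sum to $\rho-\rho_2\ge 0$ (for instance, place $\rho-\rho_2$ in the first column of each row of $Y$ and zeros elsewhere). Then $B\ge 0$, every row of $B$ sums to $\rho$, and block triangularity gives the eigenvalues of $B$ as the union (with multiplicity) of those of $B_{11}$ and $B_{22}$, which matches the prescribed list for $A$. The only slightly delicate step is the forced vanishing of $A_{21}$ in the reducible case; once that is in place, the induction and the final assembly are entirely routine.
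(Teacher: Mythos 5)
Your proof is correct. The paper does not actually prove this lemma; it simply cites Johnson's 1981 paper, so there is no in-text argument to compare against. Your induction is the standard self-contained route: diagonal similarity $D^{-1}AD$ with $D=\diag(u)$ handles the case of a strictly positive Perron vector, and the reducible case is reduced to it by observing that $A_{21}u_1=0$ with $A_{21}\ge 0$, $u_1>0$ forces $A_{21}=0$, then recursing on $A_{22}$ and padding with a nonnegative block $Y$ of row sums $\rho-\rho_2$. All steps check out. The only point worth making explicit is why the inductive hypothesis delivers $B_{22}$ with row sum exactly $\rho_2=\rho(A_{22})$: for a nonnegative matrix with constant row sum $c$ one has $B_{22}e=ce$ and $\rho(B_{22})\le\|B_{22}\|_\infty=c$, so the constant row sum is necessarily the spectral radius, which equals $\rho(A_{22})\le\rho$ since the two matrices are cospectral and the spectrum of $A_{22}$ sits inside that of $A$. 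With that one-line remark added, the argument is complete.
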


The next lemma concerns the change of $r$ eigenvalues,
$\lambda_1,\dots,\lambda_r$ with $r < n,$ and leaving invariant the
other eigenvalues
of an $n\times n$ matrix $A$ by a rank-$r$ perturbation.
It can be viewed as an extension of the result in \cite{Perfect:55};
see also \cite[Theorems 27 and 33]{Brauer:52}.

\begin{lemma}\label{lem:ExtendPerfect55}
Let $A\in \IC^{n\times n}$
with eigenvalues $\lambda_1,\cdots,\lambda_n$.
Let $X=[x_1|x_2|\cdots|x_r]\in \IC^{n\times r}$ be such that
{\rm rank}$(X)=r$ and $AX=XD,$
where $D\in \IC^{r\times r}$
with eigenvalues $\lambda_1,\cdots,\lambda_r.$
Then for any
$r\times n$ matrix $C$, the matrix
$A+XC$ has eigenvalues $\mu_1,\cdots,\mu_r,
\lambda_{r+1},\cdots,\lambda_n$,
where $\mu_1,\cdots,\mu_r$ are eigenvalues of the matrix $D+CX$.
\end{lemma}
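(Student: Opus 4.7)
The plan is to reduce the statement to a block-triangular similarity and then read off the spectrum. Since $X\in\IC^{n\times r}$ has full column rank, I would complete its columns to a basis of $\IC^n$ by choosing $Y\in\IC^{n\times(n-r)}$ so that $P=[X\ |\ Y]$ is invertible. The whole proof will then be the observation that both $A$ and $A+XC$, when conjugated by $P$, become block upper triangular with the same $(2,2)$ block.

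First I would compute $AP=[AX\ |\ AY]=[XD\ |\ AY]$ using the hypothesis $AX=XD$. Expressing $AY$ in the basis $P$ gives $AY=XB+YE$ for uniquely determined $B\in\IC^{r\times(n-r)}$ and $E\in\IC^{(n-r)\times(n-r)}$, so
$$P^{-1}AP=\begin{pmatrix} D & B \\ 0 & E\end{pmatrix}.$$
Since the spectrum of a block-triangular matrix is the union of the spectra of its diagonal blocks, and $D$ already accounts for $\lambda_1,\dots,\lambda_r$, the block $E$ must carry the eigenvalues $\lambda_{r+1},\dots,\lambda_n$.

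Next I would apply the same conjugation to $A+XC$. The key computation is
$$XC\,P=XC[X\ |\ Y]=[XCX\ |\ XCY]=[X\ |\ Y]\begin{pmatrix} CX & CY \\ 0 & 0\end{pmatrix},$$
where I used that $XCX$ and $XCY$ already lie in the column span of $X$. Adding this to the expression for $P^{-1}AP$ yields
$$P^{-1}(A+XC)P=\begin{pmatrix} D+CX & B+CY \\ 0 & E\end{pmatrix}.$$
Reading off the spectrum of this block upper triangular matrix gives exactly the eigenvalues of $D+CX$ together with those of $E$, i.e.\ $\mu_1,\dots,\mu_r,\lambda_{r+1},\dots,\lambda_n$, which is the claim.

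I do not anticipate a genuine obstacle here; the only step that needs a little care is verifying that $XC\,P$ has the stated block form (so that the lower block row is untouched by the rank-$r$ perturbation), and that the $E$ block is genuinely the same one appearing in $P^{-1}AP$. Everything else is linear algebra bookkeeping, and the result is a direct generalisation of the Brauer/Perfect rank-one and rank-$r$ shift theorems cited in the text.
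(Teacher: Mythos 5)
Your proposal is correct and is essentially the paper's own proof: both conjugate by the invertible matrix $[X\,|\,Y]$, observe that $AX=XD$ and the rank-$r$ structure of $XC$ make both $A$ and $A+XC$ block upper triangular with the same $(2,2)$ block, and read off the spectrum. The only cosmetic difference is that you write $AY=XB+YE$ where the paper uses the block rows $U,V$ of $S^{-1}$ to get $B=UAY$ and $E=VAY$.
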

\begin{proof}
Let $S=[X|Y]$ be a nonsingular matrix with
$S^{-1}=\left[\begin{array}{c}
U\\ V
\end{array}\right] $, with $U\in \IC^{r\times n}$.
Then $UX=I_r,VY=I_{n-r}$ and $(VX)^t=UY=O_{r\times (n-r)}.$
Because $AX=XD$, we have
\begin{equation}\label{eq:lemmaExtendPerfect55:1}
S^{-1}AS=\left[\begin{array}{c}
U\\
V
\end{array}\right]A[X,Y]=\left[\begin{array}{cc}
D & UAY\\
0 & VAY
\end{array}\right]
\end{equation}
and
$$
S^{-1}XCS=\left[\begin{array}{c}
I_r\\
0
\end{array}\right]CS=\left[\begin{array}{c}
C\\
0
\end{array}\right] [
X | Y  ]=\left[\begin{array}{cc}
CX & CY\\
0 & 0
\end{array}\right].
$$
Thus,
$$
S^{-1}(A+XC)S=S^{-1}AS+S^{-1}XCS=\left[\begin{array}{cc}
D+CX & UAY+CY\\
0    & VAY
\end{array}\right].
$$
Now, from \eqref{eq:lemmaExtendPerfect55:1}
we have $\sigma(VAY)=\{\lambda_{r+1},\cdots,\lambda_n \}$
and therefore
$$
\sigma(A+XC)=\sigma(D+CX) \cup \{\lambda_{r+1},\cdots,\lambda_n \}.
$$
\vskip -.3in \end{proof}

\noindent
{\bf We are now ready to present
the proof of Theorem \ref{thm:MainResult}.}

Let $A\in \Omega_\rho$ be an $n\times n$ non-negative real matrix with
eigenvalues
$\rho,b+ic,b-ic,\lambda_4,\cdots,\lambda_n$,
and let
$u\pm iv$ are eigenvectors of $A$ corresponding to the eigenvalues $b\pm ic$, where
$u=(u_1,u_2,\cdots,u_n)^T, v=(v_1,v_2,\cdots,v_n)^T\in \IR^n$.
Then we have the following equality for $n\times 2$ matrices:
\begin{equation}\label{eq:them:3.1:0}
A[u|v]=[u|v]\left[\begin{array}{cc}
b  & c \\
-c & b \\
\end{array}\right].
\end{equation}
We adopt an idea in \cite{GuoGuo:07}
and let
$$M = \begin{bmatrix}1 & \cdots & 1 \cr u_1 & \cdots & u_n \cr v_1 & \cdots & v_n\cr
\end{bmatrix}.$$
Denote by $P = P(u,v)$ a point in $\IR^2$ with co-ordinate $(u,v)$.
By Analytic Geometry, suppose
$$\det(i,j,k) = \det\left(
\begin{bmatrix}1 & 1 & 1 \cr u_i & u_j & u_k  \cr v_i & v_j & v_k\cr
\end{bmatrix}\right), \qquad 1 \le i , j , k \le n.$$
Then $|\det(i,j,k)|$ is 2 times the area of the triangle
with vertices $P_i(u_i,v_i),\ P_j(u_j,v_j)$
and $P_k(u_k,v_k)$. Moreover,
 $\det(i,j,k)>0$ if and only if the points
$P_i\rightarrow P_j\rightarrow P_k\rightarrow P_i$ are not collinear and
appear  in counterclockwise direction in $\IR^2$.

Replacing $(A,u,v)$ by $(QAQ^T,Qu,Qv)$ for a suitable permutation
matrix $Q$, we may assume that
\begin{equation}\label{eq:them:3.1:1}
\Delta=\det(1,2,3)=\max_{1\le i,j,k \le n} \det(i,j,k).
\end{equation}
Recall that $e = (1, \dots, 1)^T$.
Since $e,u+iv, u-iv$ are the eigenvectors of
the distinct eigenvalues $\rho, \lambda_2, \lambda_3$,
so $e,u,v$ are linearly independent over $\IR$.
It follows that $\Delta=\det(1,2,3)>0$.
Let
$$
x=(x_1,x_2,x_3,0,\cdots,0)^T \quad \hbox{ and }
\quad y=(y_1,y_2,y_3,0,\cdots,0)^T
$$
satisfy
\begin{equation}\label{eq:them:3.1:2}
x^Te=0, x^Tu=1, x^Tv=0; y^Te=0, y^Tu=0, y^Tv=1,
\end{equation}
that is,
$$\begin{bmatrix} 1 & 1 & 1 \cr u_1 & u_2 & u_3 \cr v_1 & v_2 & v_3\cr\end{bmatrix}
\begin{bmatrix} x_1 & y_1  \cr x_2 & y_2  \cr x_3 & y_3 \cr\end{bmatrix} =
\begin{bmatrix} 0 & 0  \cr 1 & 0 \cr 0 & 1\cr\end{bmatrix}.$$
Then
\begin{multline}\label{multline:them:3.1:0}
\ \hskip 1.1 in x_1=\frac{1}{\Delta}(v_2-v_3),x_2=\frac{1}{\Delta}(v_3-v_1),
x_3 =\frac{1}{\Delta}(v_1-v_2),\\
y_1 =\frac{1}{\Delta}(u_3-u_2),y_2
=\frac{1}{\Delta}(u_1-u_3),y_3=\frac{1}{\Delta}(u_2-u_1). \hskip .7in ~
\end{multline}
and
$$
[x,y]^T[u,v]=I_2.
$$
Suppose
\begin{equation}\label{eq:them:3.1:20}
[u|v][x|y]^T=\left[\begin{array}{cccccc}
\a_{11} & \a_{12} & \a_{13} & 0 &\cdots & 0\\
\a_{21} & \a_{22} & \a_{23} & 0 &\cdots & 0\\
\a_{31} & \a_{32} & \a_{33} & 0 &\cdots & 0\\
\vdots & \vdots & \vdots &\vdots &   &\vdots\\
\a_{n1} &\a_{n2} & \a_{n3} & 0 &\cdots & 0\\
\end{array}\right].
\end{equation}
Then for $i = 1, \dots, n$,
$$\a_{i1}  = u_ix_1 + v_iy_1 =
\frac{1}{\Delta}\det(i,2,3)-\frac{1}{\Delta}(u_2v_3-u_3v_2),$$
$$ \a_{i2} = u_ix_2+v_i y_2 =
\frac{1}{\Delta}\det(1,i,3)-\frac{1}{\Delta}(u_3v_1-u_1v_3),$$
$$\a_{i3} = u_ix_3+v_iy_3 =
\frac{1}{\Delta}\det(1,2,i)-\frac{1}{\Delta}(u_1v_2-u_2v_1).$$
If
$$
c_{i1}=\a_{i1}-\a_{21} = \frac{1}{\Delta}\det(i,2,3), \
c_{i2}=\a_{i2}-\a_{32} = \frac{1}{\Delta}\det(1,i,3), \
c_{i3}=\a_{i3}-\a_{23} = \frac{1}{\Delta}\det(1,2,i),$$
then
\begin{equation}\label{eq:them:3.1:4}
c_{11}\ge c_{i1}, \quad c_{22}\ge c_{i2}, \quad c_{33}\ge c_{i3},
\end{equation}
because $\Delta= \det(1,2,3) \ge \det(i,j,k)$ for all $1 \le i,j,k \le n$.
Let
$$
c_{i1}=\min_{l=1,2\dots, n} c_{l1}, \quad
c_{j2}=\min_{l=1,2\dots, n} c_{l2}, \quad
c_{k3}=\min_{l=1,2\dots, n} c_{l3}.$$
Then $
c_{i1}\le c_{l 1},\ c_{j2}\le c_{l 2}\ \mbox{ and }\ c_{k3}\le c_{l 3}\ \mbox{ for all  }l=1,2\dots, n$. Therefore, we have
\begin{equation}
\label{ijk}
\alpha_{i1}\le \alpha_{l 1}, \ \alpha_{j2}\le \alpha_{l 2}\ \mbox{ and }\ \alpha_{k3}\le \alpha_{l 3}\ \mbox{ for all  }l=1,2\dots, n.
\end{equation}

Assume that $n \ge 6$, and that $1,2,3,i,j,k$ are distinct,
and focus on
\begin{equation}\label{tildeM}
\tilde M = \begin{bmatrix}
1 & 1 & 1 & 1 & 1 & 1 \cr
u_1& u_2 & u_3 & u_i & u_j & u_k \cr
v_1& v_2 & v_3 & v_i & v_j & v_k\cr
\end{bmatrix}.
\end{equation}
Note that for the following points in $\IR^2$,
$$P_1(u_1,v_1), P_2(u_2,v_2), P_3(u_3,v_3),
P_i(u_i,v_i), P_j(u_j,v_j), P_k(u_k,v_k),$$
\begin{itemize}
\item the area of a triangle  formed by any three of these points is not larger
than $\dfrac{\det(1,2,3)}{2}$, which is the area of the triangle with vertices
$P_1, P_2, P_3$;
\item $c_{i1} \le\frac{\det(2,2,3)}{\Delta}= \frac{\det(2,3,3)}{\Delta} = 0$, \
$c_{j2} \le\frac{\det(1,1,3)}{\Delta}= \frac{\det(1,3,3)}{\Delta} = 0$, \
$c_{k3} \le\frac{\det(1,2,1)}{\Delta}= \frac{\det(1,2,2)}{\Delta} = 0$.
\end{itemize}
Thus, $\det(i,2,3), \det(1,j,3), \det(1,2,k) \in (-\infty,0]$.
Note that $\det(r,s,t) \le 0$ if and only if $P_r, P_s, P_t$ are
collinear or they are in clockwise direction. Let $\ell_1$
(respectively, $\ell_2,\ \ell_3$) be the line through $P_1$
(respectively, $P_2,\ P_3$) parallel to $\overline{P_2P_3}$, (respectively,
$\overline{P_1P_3},\ \overline{P_1P_2}$). Suppose $\ell_2$ and $\ell_3$
(respectively,  $\ell_1$ and $\ell_3$, $\ell_1$ and $\ell_2$) intersect at $Q_1$
(respectively,  $Q_2$ and $Q_3$).
Since $\det(i,2,3)\le 0$ and $|\det(1,2,i)|,\ |\det(1,3,i)|\le \det(1,2,3)$, $P_i$
lies in the triangle $Q_1P_3P_2$. Similarly, $P_j$ and $P_k$ lie in the triangles
$P_1P_3Q_2$ and $P_1Q_3P_2$ respectively.  Thus
$P_1P_kP_2P_iP_3P_j$ is a convex hexagon (including
the degenerate cases, when it is a
triangle, quadrilateral or pentagon).
Moreover, the vertices $P_1, P_j, P_3, P_i, P_2, P_k, P_1$
are in clockwise direction.
By Proposition \ref{Hexagon},
$$\frac{5}{4} \ge \frac{1}{\Delta}\left(|\det(i,2,3)|
+ |\det(1,j,3)| + |\det(1,2,k)|\right)
= -( c_{i1}+c_{j2}+c_{k3})\ge 0.
$$
It follows that
\begin{equation}\label{equ:max:maintheo}
-1\ge \a_{i1}+\a_{j2}+\a_{k3} =
c_{i1}+\a_{21} + c_{j2} + \a_{32} + c_{k3} + \a_{23} \ge
-\frac{5}{4}-1=-2.25.
\end{equation}
 Suppose $\tilde t\ge 2.25t\ge 0$. Let
$$\delta=\dfrac{\tilde t+t(\a_{i1} + \a_{j2} + \a_{k3})}{3}\ge\dfrac{\tilde t-2.25t}3\ge 0$$
Set
$$z = (- t\a_{i1}+\delta,\ - t\a_{j2}+\delta,\ - t\a_{k3}+\delta,0,\cdots,0)^T\ \mbox{ and }\tilde A = A+[e|u|v][z|tx|ty]^T.$$
By direct computation, we have
$$[z|tx|ty]^T[e|u|v]=\[\begin{array}{ccc} \tilde t&*&*\\
0&t&0\\
0&0&t\end{array}\]\,.$$
By Lemma \ref{lem:ExtendPerfect55},
the eigenvalues of $\tilde A$ are
$\rho+\tilde t,\sigma_2,\sigma_3,\lambda_4,
\cdots,\lambda_n,$ where $\sigma_2,\sigma_3$
are the eigenvalues of
$\left[\begin{array}{cc}
b  & c\\
-c & b\\
\end{array}\right]+tI_2$, that is, $\sigma_2=b+t+ic,\ \sigma_3=b+t-ic$.

Let

$$[e|u|v][z|tx|ty]^T=\left[\begin{array}{cccccc}
\b_{11} & \b_{12} & \b_{13} & 0 &\cdots & 0\\
\b_{21} & \b_{22} & \b_{23} & 0 &\cdots & 0\\
\b_{31} & \b_{32} & \b_{33} & 0 &\cdots & 0\\
\vdots & \vdots & \vdots &\vdots &   &\vdots\\
\b_{n1} &\b_{n2} & \b_{n3} & 0 &\cdots & 0\\
\end{array}\right]$$
By (\ref{ijk}), we have
$$
\begin{array}{rl}
\b_{l1}=&t(\a_{l1}-\a_{i1})+\delta\ge 0\\&\\
\b_{l2}=&t(\a_{l2}-\a_{j2})+\delta\ge 0\\&\\
\b_{l3}=&t(\a_{l3}-\a_{k3})+\delta\ge 0.\end{array}$$
Thus, $\tilde A$ also has nonnegative entries.
Hence, $\tilde A$ is the desired matrix.

Suppose $n = 5,4,3$. Then the matrix $\tilde M$ in (\ref{tildeM}) has at most
$n$ columns. Nevertheless,
we can apply a similar argument and use the
corresponding result in Proposition \ref{Hexagon} to construct the desired
matrix $\tilde A$. We omit the details.
\qed

\section{Proof of Proposition \ref{Hexagon}}
\setcounter{equation}{0}

The purpose of this section is to prove the Proposition \ref{Hexagon}.
The results for $n = 3$ is trivial.

We will assume that $P_1,\dots,P_n$ are vertices of the convex polygon arranged in counterclockwise direction. The following two facts are useful in our discussion.

(a)  One can  apply an affine transformation $v \mapsto Tv + v_0$
for some invertible $2\times 2$ matrix $T$ and $v_0 \in \IR^2$ to the points
$P_1, \dots, P_n$ without affecting the hypothesis and conclusion of
the result.

(b) One can always find an affine map to
send any 3 vertices of the polygon  to any 3 non-collinear points.

\medskip
Suppose $n = 4$. One may apply an affine transformation and assume that
$P_1=(0,0),\ P_2=(1,0),\ P_3=(1,1)$  are the   vertices of the triangle of largest area.
Since  all the triangles inside the quadrilateral have area at most 1/2,
  the fourth vertex is in the triangle with vertices $(0,0), (1,1), (0,1)$.
The conclusion of Proposition \ref{Hexagon} follows readily.

\medskip
Suppose $n = 5$ and $P_1,\dots,P_5$ are vertices of a convex pentagon arranged in counterclockwise direction. Let $T$ be a triangle of largest area.

{\bf Case 1. } $T$ has two sides in common with the pentagon. We may assume that
$P_1=(0,0),\ P_2=(1,0),\ P_3=(1,1)$ are the vertices of $T$.
Then $P_4$ and $P_5$ have to lie in the triangle with vertices $(1,0), (1,1), (0,1)$
and the conclusion of Proposition \ref{Hexagon} follows readily.

{\bf Case 2. }  $T$ has only one side in common with the pentagon. We may assume that
$P_1=(0,0),\ P_2=(1,0),\ P_4=(0,1)$ are the vertices of $T$. Then we have

\begin{itemize}
\item[{\rm (a)}] $P_3=(u_3,v_3)$   lies in the triangle with vertices $(1,0), (1,1), (0,1)$, and
    \item[{\rm (b)}]     $P_5=(-u_5,v_5)$   lies  in the triangle with vertices $(0,0), (0,1), (-1,1)$.
\end{itemize}
By applying the affine transformation $(x,\ y)\mapsto (1-(x+y),\ y)$, if necessary,
we may assume that $v_3\ge v_5$. For the convenience of calculation, we will use
$\Delta(i,j,k)$ to denote twice the area of the triangle with vertices
$P_i,\ P_j,\ P_k$.  We will show that subject to the constraints (a), (b) and
$\Delta(2,3,5)\le 1$, we have $\Delta(1,2,4)+\Delta(2,3,4)+ \Delta(1,4,5)\le \sqrt{5}$,
where the equality holds at $(u_3,v_3) = (2, \sqrt 5-1)/2$
and $(-u_5,v_5) = (1-\sqrt 5, \sqrt 5 -1)/2$.

\medskip
By direct calculation, we have
$$\begin{array}{c}\Delta(2,3,5) =v_3(1+u_5)-(1-u_3)v_5\quad \mbox{ and }\\ \\
\Delta(1,2,4)+\Delta(2,3,4)+ \Delta(1,4,5)=u_3+u_5+v_3\,.\end{array}$$

So we need to show that  subject to the constraints
\begin{equation}\label{eq5}
1\le u_3+v_3,\ u_3\le 1,\ 0\le u_5\le v_5\le v_3\le 1,\ 
v_3(1+u_5)-(1-u_3)v_5\le 1\,,
\end{equation}
the maximum value of $u_3+u_5+v_3$ is $\sqrt{5}$.

\smallskip

We can replace $v_5$ by $v_3$ without changing  $u_3+u_5+v_3$ or violating the constraints. So we will assume that $v_5=v_3$. Then the constraints in (\ref{eq5}) becomes

$$1\le u_3+v_3,\ u_3\le 1,\ 0\le u_5\le   v_3\le 1,\ (u_3+u_5)v_3\le 1$$
So we have $u_3+u_5 \le 1+ v_3,\ \dfrac{1}{v_3}$. Therefore, for fixed $0\le v_3 \le 1$, the maximum of $u_3+u_5+v_3$ is equal to $1+ 2v_3$, if $1+v_3 \le \dfrac{1}{v_3}\Lra v_3\le \dfrac{\sqrt{5}-1}2$, and $v_3+ \dfrac{1}{v_3}$ if $1+v_3 \le \dfrac{1}{v_3}\Lra v_3\ge \dfrac{\sqrt{5}-1}2$. Maximizing over $v_3$ in both cases, we have the maximum value $\sqrt{5}$   attained at $v_3= \dfrac{\sqrt{5}-1}2$. Thus the maximum of $u_3+u_5+v_3$ is attained at $u_3=1,\ u_5=v_3=v_5=\dfrac{\sqrt{5}-1}2$. We note that for these values of $u_3,\ u_5,\ v_3,\ v_5$, we actually have  $\Delta(i,j,k)\le 1 $ for all $1\le i<j<k\le 5$.

\bigskip

Finally, we consider the intricate case when $n = 6$.
Suppose a (non-degenerate) convex hexagon has vertices $P_1(x_1,y_1), \dots, P_6(x_6,y_6)$ arranged
in counterclockwise direction. We will prove that

\begin{equation}\label{max}
\dfrac{\mbox{Area of the hexagon with vertices }P_1,\ P_2, \dots, P_6}
{\max\{\mbox{Area of triangle  with vertices }  P_i,\  P_j ,\ P_k :
1\le i<j<k\le 6\}}\le \dfrac{9}{4},
\end{equation}
where the inequality becomes an equality
for the hexagon $\cH_0$ with vertices
$$(0,\ 0),\ (1,\ 0),\ (\dfrac{5}{6},\ \dfrac{2}{3}),\ (0,\ 1),
\ (-\dfrac{1}{4},\ 1),\ (-\dfrac{2}{3},\ \dfrac{2}{3}).$$
Note that a direct calculation
shows that the area of the  triangle with vertices
$(0,\ 0),\ (1,\ 0),\   (0,\ 1)$ is $\dfrac{1}{2}$,
which is  maximum among all triangles with vertices from $\cH_0$.

\medskip
\begin{lemma}\label{boundary}
Suppose the maximum of the left hand side of (\ref{max}) is attained at some
hexagon $\cH$ with vertices $P_1 , \dots, P_6 $. Then
$$ \max\{\mbox{Area of triangle  with vertices }
P_i,\  P_j ,\ P_k :1\le i<j<k\le 6\}$$
is attained at some  triangle with at least one side in common with the
boundary  of $\cH$.
\end{lemma}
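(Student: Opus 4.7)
The plan is to argue by contradiction. Suppose the maximum of the ratio in \eqref{max} is attained at $\cH$, yet every triangle with vertices in $\{P_1,\dots,P_6\}$ that achieves the maximum triangle area has no side on $\partial\cH$. A triangle $P_iP_jP_k$ shares a side with $\partial\cH$ if and only if two of the three indices are cyclically consecutive mod $6$, so the only ``boundary-free'' triangles are $T_1=P_1P_3P_5$ and $T_2=P_2P_4P_6$. Every maximizer is therefore $T_1$ or $T_2$, which gives two cases.

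In the first case only one of $T_1,T_2$ attains the maximum triangle area $M$; say $T_1$. I would move $P_2$ a small distance $\epsilon>0$ along the outward unit normal $\vec n_1$ to the line $P_1P_3$ on the side opposite $P_5$. For sufficiently small $\epsilon$, the hexagon stays convex (an open condition); by continuity every triangle involving $P_2$ still has area strictly below $M$; and $\mathrm{Area}(P_1P_2P_3)$ grows linearly in $\epsilon$ at rate $|P_1P_3|/2$. Consequently the hexagon area strictly grows while $M$ is unchanged, so the ratio strictly grows, contradicting the optimality of $\cH$. The symmetric sub-case in which only $T_2$ is the maximizer is handled identically.

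In the second case both $T_1$ and $T_2$ have area $M$. Let $\vec n_2$ be the outward unit normal to the line $P_4P_6$ on the side of $P_2$. A perturbation $P_2\mapsto P_2+\epsilon\vec v$ preserves $\mathrm{Area}(T_2)\le M$ iff $\vec v\cdot\vec n_2\le 0$ and strictly grows $\mathrm{Area}(P_1P_2P_3)$ iff $\vec v\cdot\vec n_1>0$. When $\vec n_1\not\parallel\vec n_2$, equivalently $P_1P_3\not\parallel P_4P_6$, such a $\vec v$ fills a nonempty open cone, and the continuity argument of the first case again contradicts optimality. The residual possibility is $\vec n_1=\vec n_2$, forced when $P_1P_3\parallel P_4P_6$; in that case a single-vertex perturbation of $P_2$ cannot separate the two binding constraints, so I would perturb $P_2$ and $P_4$ jointly, moving $P_2$ along $\vec n_1$ while simultaneously displacing $P_4$ in a direction that annihilates the first-order change in $\mathrm{Area}(T_2)$, and check by a direct first-order expansion that $\mathrm{Area}(P_1P_2P_3)+\mathrm{Area}(P_3P_4P_5)$ strictly grows.

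This degenerate sub-case is the main obstacle, because the parallelism aligns the gradients of the two binding constraints on $P_2$ and forces me to use a multi-vertex perturbation with a careful first-order analysis. A clean alternative would be to invoke affine invariance, fix $T_1$ as a standard triangle, and show by a direct area estimate that any convex hexagon whose maximum-area vertex triangle is of type $\{1,3,5\}$ (or $\{2,4,6\}$) satisfies (hexagon area)$/M\le 2<9/4$; this would contradict the optimality of $\cH$ at once, since the ratio $9/4$ is already witnessed by the hexagon $\cH_0$ displayed in the surrounding proposition.
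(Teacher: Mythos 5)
Your reduction to the two ``diagonal'' triangles $P_1P_3P_5$ and $P_2P_4P_6$ is right, and your Case 1 and the non-parallel part of Case 2 are sound (modulo the routine check that strict convexity and the strict inequalities $\mathrm{Area}<M$ survive a small perturbation). But the argument has a genuine gap exactly where you flag it, and the obstruction is not a removable technicality: consider the regular hexagon (or any affinely regular one). There both $P_1P_3P_5$ and $P_2P_4P_6$ are the unique maximal-area vertex triangles, neither shares a side with the boundary, and \emph{all} the relevant alignments hold at once --- $P_1P_3\parallel P_4P_6$, $P_3P_5\parallel P_2P_6$, $P_5P_1\parallel P_2P_4$, with the matching length ratios. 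At such a configuration the gradient (in the variables $P_2,P_4,P_6$) of the hexagon area is a positive multiple of the gradient of $\mathrm{Area}(P_2P_4P_6)$, so no first-order perturbation of $P_2$, nor of $P_2$ and $P_4$ jointly, nor even of all three even-indexed vertices, can increase the hexagon area while keeping $\mathrm{Area}(P_2P_4P_6)\le M$. Your proposed ``joint perturbation of $P_2$ and $P_4$'' therefore cannot close this sub-case; the perturbation machinery bottoms out precisely at these KKT-type critical configurations, and one must then show directly that every such configuration has ratio strictly below $9/4$.

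Your fallback --- normalize $P_1P_3P_5$ to a standard triangle and prove that a convex hexagon whose maximal vertex triangle is $P_1P_3P_5$ has area at most twice that triangle's area --- is exactly the substantive content of the paper's proof, not a shortcut around it. In the paper's notation this is the bound $\Delta(1,2,3)+\Delta(3,4,5)+\Delta(1,5,6)\le 1$ under the constraints $\Delta(1,3,5)=1$, $\Delta(2,4,6)\le 1$; the easy containment of $P_2,P_4,P_6$ in the three corner triangles only gives the sum $\le 3$ (ratio $\le 4$), so the constraint $\Delta(2,4,6)\le 1$ must be used in an essential way. The paper does this by applying shears $P_i\mapsto P_i+d(P_j-P_k)$ that preserve both $\Delta(1,3,5)$ and $\Delta(2,4,6)$, extracting first-order conditions from maximality, and then computing at the resulting critical point that $\Delta(2,4,6)=(u_2+u_3)^2\le 1$ forces $\Delta_0=2(u_2+u_3)-1\le 1$. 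So the paper's route is itself ``perturb, then compute at the critical configuration''; what your proposal is missing is that final computation, and asserting the area estimate does not supply it.
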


\it Proof. \rm
 Let $M$ be the maximum of the left hand side of (\ref{max})
 over all (non-degenerate) convex hexagon.
Clearly, $M$ exists and $\dfrac{9}{4}\le M\le 4$.

Suppose the maximum of the left hand side of (\ref{max}) is attained at some hexagon
$\cH$ with vertices $P_1 , \dots, P_6 $, labeled in counterclockwise direction.
We are going to prove the result  by contradiction.

Suppose the maximum of the area of triangles with vertices $ P_i,\  P_j ,\ P_k$,
$1\le i<j<k\le 6$ can only be  attained at triangles with no side in common with the
hexagon $\cH$. Without loss of generality, we may assume that the maximum is attained
at the triangle with vertices $ P_1,\  P_3 ,\ P_5$.
Using an affine transformation, we may assume that $P_1=(0,\ 0)$, $P_3=(1,\ 0)$ and
$P_5=(0,\ 1)$. For the convenience of notation and computation, let
$$\Delta(i,j,k)=2\times (\mbox{area of triangle with vertices }P_i,\ P_j,\ P_k)$$
for $1\le i<j<k\le 6$.
By our assumption, we have
\begin{equation}\label{135}
\Delta(1,3,5)=1 ,\  \Delta(2,4,6)\le 1\ \mbox{ and }\Delta(i,j,k)<1\ \mbox{ for all }
(i,j,k)\neq (1,3,5),\ (2,4,6).
\end{equation}
We will prove that under the conditions in (\ref{135}), the area of the hexagon
$\cH$ is less than or equal to $1$, which contradicts the fact that
$M\ge \dfrac{9}{4}$ as shown by our example before Lemma \ref{boundary}.

In the following,
we will prove that under the conditions in (\ref{135}), we have
\begin{equation}\label{123}
\Delta_0=\Delta(1,2,3)+ \Delta(3,4,5)+ \Delta(1,5,6)\le 1
\end{equation}
Suppose $P_2=(u_1,\ -v_1)$, $P_4=(u_2,\ v_2)$ and $P_6=(-u_3,v_3)$.
Let
$$   A  =\left[\begin{array}{cccccc}1 & 1 & 1 & 1  & 1  & 1\\
0&u_1&1&u_2&0&-u_3\\
0&-v_1&0&v_2& 1&v_3\end{array}\right].
$$
Then  $|\Delta(i,j,k)|$ is equal to the determinant of the
submatrix of $A$ lying in columns $i,j,k$. By (\ref{135}), we have

\medskip
\begin{equation}\label{strict}
  \begin{array}{rcl}\Delta(1,3,5) & =
  & 1\mbox{  is the maximum, among all }\Delta(i,j,k)\\&\\
 \Delta(2,4,6)  &=&  (u_2- u_1)(v_1+ v_3) + (u_1   + u_3)(v_1 + v_2)
 \le 1 ,\ \mbox{ and }\\& &\\
0\le v_1<u_1<1,& & u_2<1,\ v_2<1, \quad   u_2+v_2\ge 1,
\quad 0\le u_3<v_3<1\,.
\end{array}
\end{equation}
By direct computation, we have
$$
\Delta_0= u_2 + u_3 + v_1 + v_2-1.
$$

\medskip
Note that the area of the triangle with vertices $P_i,P_j,P_k$
will not change if we replace $P_i$ by $P_i + d(P_j-P_k)$ for any
$d \in \IR$. Thus,
$\Delta(1,3,5)$ will not be affected and
$\Delta(2,4,6)$  will not
change under the following transformations:
\begin{enumerate}
\item $(u_1,v_1,u_2,v_2,u_3,v_3)\to (u_1+(u_2 + u_3) d,
v_1+ (v_3 - v_2)d, u_2,v_2,u_3,v_3)$,

\item $(u_1,v_1,u_2,v_2,u_3,v_3)\to (u_1,v_1,u_2+(u_1 + u_3) d,v_2-
(v_1 + v_3) d,u_3,v_3)$,

\item $(u_1,v_1,u_2,v_2,u_3,v_3)\to (u_1,v_1,u_2,v_2,u_3+(u_1 - u_2) d,
v_3+ (v_1 + v_2) d)$
\end{enumerate}
For $(i,j,k) \neq (1,3,5)$ and $(2,4,6) $,
$\Delta(i,j,k)<1 $ will hold for sufficiently small $d>0$, whereas
$\Delta_0$ will change to
\begin{enumerate}
\item $\Delta_0+ (v_3 - v_2) d$,

\item $\Delta_0+(u_1+u_3-v_1 - v_3) d$,

\item $\Delta_0+(u_1 -u_2) d$,

\end{enumerate}
respectively. By the maximality of $\Delta_0$, we must have
$$v_2-v_3=(u_1+u_3-v_1 - v_3)=(u_1 -u_2)=0\,,  $$
which gives
$$u_1=u_2,\ v_1 = u_2 + u_3 - v_3,\ v_2 = v_3\,.$$
Substituting into $\Delta(2,4,6)$, we have
$$\Delta(2,4,6)=(u_2+u_3)^2\le 1\ \Rightarrow\ (u_2+u_3)\le 1\,.$$
Substituting into $\Delta_0$, we have
$$\Delta_0=  2 u_2 + 2 u_3-1\le 1,$$
which is the desired contradiction. \qed

\bigskip

By Lemma \ref{boundary}, we can assume that the largest triangle $\Delta$ in the hexagon $\cH$   has at least one side in common with $\cH$. We consider two cases.

\noindent{\bf Case 1} $\Delta$    has   two sides in common with $\cH$.
Then we may
assume that $\Delta$ is the  triangle with vertices $P_1,P_2,P_3$.
Using an affine transformation, we may assume that $P_1=(0,\ 0)$, $P_2=(1,\ 0)$ and
$P_3=(0,\ 1)$. Then  $P_4$, $P_5$ and $P_6$ have to lie inside the triangle with vertices, $(0,\ 0) ,\ (1,\ 1)$ and
$(0,\ 1)$. Therefore, $\cH$ has area less than or equal to $1$, a contradiction.

\bigskip
\noindent{\bf Case 2} $\Delta$   has   one side in common with $\cH$.
Then we may
assume that $\Delta$ is the     triangle with vertices $P_1,P_2,P_4$.

Using an affine transformation, we may assume that $P_1=(0,0)$, $P_2=(1,0)$ and
$P_4=(0,1)$. Let $P_3=(u_1,v_1) $, $P_5=(-u_2,v_2)$ and $P_6=(-u_3,v_3)$, where
$u_1, u_2, u_3,  v_1, v_2, v_3 \ge 0.$
So, we have a hexagon with vertices
$(0,0),\ (1,0),\ (u_1,v_1),\ (0,1),\   (-u_2,v_2),\ (-u_3,v_3)$.
Since the hexagon is convex, we have
\begin{equation}\label{convex2}
u_1+v_1\ge 1,\ v_2\ge v_3,\ u_3 v_2 \ge u_2 v_3,\ \mbox{ and }
u_3 v_2 - u_2 v_3 \ge u_3 - u_2\end{equation}
Let
$$
\tilde A  =\left[\begin{array}{cccccc}
1 & 1 & 1 & 1 & 1     & 1  \\
0 & 1 &u_1  & 0&-u_2  &-u_3\\
0 & 0 &v_1 & 1 &v_2   &v_3 \\
\end{array}\right].
$$
Then  $|\tilde \Delta(i,j,k)|$ is the determinant of the
submatrix of $\tilde A$ lying in columns $i,j,k$, and assume that

\begin{equation}\label{124}
\tilde \Delta(1,2,4)=1 ,\quad    \mbox{ and }
\quad \tilde \Delta(i,j,k)\le 1\ \mbox{ for all } 1\le i<j<k \le 6
\end{equation}
It follows from   (\ref{124}) that

\medskip
(a) \  $(u_1,v_1)$ lies in the triangle with vertices $(1,0), (1,1), (0,1)$. Equivalently, $0\le 1-u_1\le v_1\le 1$.

\medskip
(b) \ $(-u_2,v_2)$ and $(-u_3,v_3)$ lie  in the triangle with vertices
$(0,0), (0,1), (-1,1)$. Equivalently,
$$0\le u_2\le v_2\le 1\quad\hbox{ and } \quad 0\le u_3\le v_3\le 1.$$
Let
$$g(u_1,v_1,u_2,v_2,u_3,v_3)
=  \tilde \Delta(2,3,4) + \tilde \Delta(1,4,5)  + \tilde \Delta(1,5,6)
=u_1 + u_2 + v_1 + u_3 v_2 - u_2 v_3-1 \,.$$
Suppose $g$ attains a maximum $M$ at $(u_1,v_1,u_2,v_2,u_3,v_3)$ subject to
the constraints (\ref{convex2}) and (\ref{124}). We are going to show that
\begin{equation}\label{fmax1}M\le \dfrac{5}{4}
\end{equation}

\begin{lemma}\label{max1}  Suppose  $(u_1,v_1,u_2,v_2,u_3,v_3)$
satisfy (a) and (b) such that
$g(u_1,v_1,u_2,v_2,u_3,v_3)\ge\dfrac{5}{4} $. Then
$$u_1+v_1\ge \dfrac{5}{4} ,\quad  v_2\ge \dfrac{1}{4}.$$
\end{lemma}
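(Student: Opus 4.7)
The plan is to rewrite $g$ so that the dependence on the six variables cleanly separates into a piece capturing $u_1+v_1$ and a piece that can be bounded by $v_2$ alone. Concretely, I would regroup
$$g(u_1,v_1,u_2,v_2,u_3,v_3) = (u_1+v_1-1) + u_2(1-v_3) + u_3 v_2,$$
which isolates the two quantities appearing in the conclusions.

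The key step is to show that the cross term $u_2(1-v_3) + u_3 v_2$ is at most $v_2$. This follows directly from the box constraints in (b): since $u_2\le v_2$ and $1-v_3\ge 0$, one has $u_2(1-v_3)\le v_2(1-v_3)$; and since $u_3\le v_3$ and $v_2\ge 0$, one has $u_3 v_2\le v_3 v_2$. Adding and simplifying yields
$$u_2(1-v_3) + u_3 v_2 \;\le\; v_2(1-v_3) + v_3 v_2 \;=\; v_2,$$
and hence $g \le (u_1+v_1-1) + v_2$.

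Combining this upper bound with the hypothesis $g\ge \dfrac{5}{4}$ gives $(u_1+v_1-1) + v_2 \ge \dfrac{5}{4}$. Both conclusions then fall out by applying the trivial bounds $v_2\le 1$ (from (b)) and $u_1+v_1\le 2$ (from (a)): the former yields $u_1+v_1\ge \dfrac{5}{4}$, and the latter yields $v_2\ge \dfrac{1}{4}$.

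The whole argument reduces to a one-line product estimate, so there is no substantive obstacle. The only mild subtlety is spotting the correct regrouping of $g$, after which $u_2$ and $u_3$ appear only inside a quantity that is majorized by $v_2$ via the inequalities in (b).
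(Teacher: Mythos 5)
Your proof is correct and follows essentially the same route as the paper's: both establish the key bound $g \le (u_1+v_1-1)+v_2$ by majorizing the $u_2,u_3$ terms by $v_2$ via the inequalities in (b), and then conclude using $v_2\le 1$ and $u_1+v_1\le 2$. Your regrouping $g=(u_1+v_1-1)+u_2(1-v_3)+u_3v_2$ is, if anything, slightly cleaner than the paper's intermediate identity.
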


\it Proof. \rm Suppose at some  $(u_1,v_1,u_2,v_2,u_3,v_3)$ satisfying (a) and (b), $g(u_1,v_1,u_2,v_2,u_3,v_3)\ge\dfrac{5}{4} $.
Then
$$\begin{array}{rcl}\dfrac{5}{4}&\le &u_1 + u_2 + v_1 + u_3 v_2 - u_2 v_3-1\\&\\
&= &u_1+ v_1 -1+ u_2(1-v_2+u_3)  + (v_2 - u_2) u_3\\&\\
&\le &(u_1+ v_1 -1)+u_2 + (v_2 - u_2)\\&\\
&= &(u_1+ v_1 -1)+ v_2.\end{array}$$
Since $(u_1+ v_1 -1), v_2\le 1$, the result follows.  \qed

Let us focus on the following constraints.

\medskip
(c) $ \tilde \Delta(1,3,5) =u_ 2 v_ 1 + u_ 1 v_ 2  \le 1$,

\medskip
(d) $  \tilde \Delta(1,3,6)    =  u_ 3 v_ 1 + u_ 1 v_ 3   \le 1$,

\medskip
(e) $  \tilde \Delta(2,3,5)=v_1 - v_2+ u_ 2 v_1  + u_1 v_ 2   \le 1$,

\medskip
(f) $  \tilde \Delta(2,3,6)=v_1 - v_3+ u_ 3 v_1  + u_1 v_3  \le 1$,

\medskip\noindent
Consider the maximization problems under the following constraints:

  \begin{enumerate}
   \item $M_1=$ maximum of $g$ under the constraints $v_1\le v_3 $,   (a), (b), (c), (d) and   (\ref{convex2}).

  \item $M_2=$ maximum of $g$ under the constraints $v_3\le v_1\le v_2$,  (a), (b), (c)   and  (f).


      \item $M_3=$ maximum of $g$ under the constraints $v_2\le v_1$, (a), (b), (f) and   (\ref{convex2}).
 \end{enumerate}

Because $v_3\le v_2$, we have $M\le \max\{ M_1,\ M_2, \ M_3\}$.
So (\ref{fmax1}) will follow from the following.

  \begin{proposition}\label{M123}
   $M_1,\ M_3\le M_2 \le \dfrac{5}{4}$.
  \end{proposition}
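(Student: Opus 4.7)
My plan is to dispatch the three inequalities in sequence, starting with the central bound $M_2 \le 5/4$, and then to reduce the $M_1$ and $M_3$ cases to it by deformation arguments that move feasible points into the $M_2$ region without decreasing $g$. The value $5/4$ is achieved by the hexagon $\cH_0$ exhibited before Lemma \ref{boundary}: in the Case 2 parameters this corresponds to $u_1 = 5/6$, $v_1 = 2/3$, $u_2 = 1/4$, $v_2 = 1$, $u_3 = 2/3$, $v_3 = 2/3$, which yields $g = 5/4$ and lies on the common boundary of all three regions (with $v_3 = v_1 = u_3$, $v_2 = 1$, and both (c) and (f) tight).

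For $M_2 \le 5/4$: the feasible set is compact and $g$ is smooth, so a maximum is attained. The gradient $\nabla g = (1, 1, 1-v_3, u_3, v_2, -u_2)$ has no interior zeros, so the maximum lies on the boundary, and we analyze the KKT conditions. Since $\partial g/\partial v_2 = u_3 \ge 0$ and the only upper bound on $v_2$ is $v_2 \le 1$, at the maximum $v_2 = 1$. Since $\partial g/\partial v_3 = -u_2 \le 0$, $v_3$ attains its lower $M_2$-bound, $v_3 = v_1$. Since $\partial g/\partial u_3 = v_2 = 1 > 0$, the bound (b) forces $u_3 = v_3 = v_1$. The remaining constraints (c) and (f) then become the equations $u_2 v_1 + u_1 = 1$ and $v_1(u_1 + v_1) = 1$, which reduce $g$ to the single-variable function $g(v_1) = 3/v_1 - 1/v_1^2 - 1$. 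This is maximized at $v_1 = 2/3$ with value $5/4$. The degenerate sub-cases where other boundary constraints are active instead (e.g.\ $u_2 = 0$, $u_1 = 1$, or $v_1 = 1 - u_1$) must be checked separately and each yields a strictly smaller value.

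For $M_1 \le M_2$: in the $M_1$ region, $v_1 \le v_3$ and we have (a), (b), (c), (d), (\ref{convex2}). The deformation $v_3 \mapsto \max\{v_1, u_3\}$ does not decrease $g$ (since $\partial g/\partial v_3 \le 0$) and preserves (b) and (\ref{convex2}). When $u_3 \le v_1$, the deformation lands at $v_3 = v_1$; the $M_2$-constraint (f) there reads $v_1(u_1+u_3) \le 1$, which follows from (d) because $u_1 v_3 \ge u_1 v_1$. When $u_3 > v_1$, so $v_3 = u_3$ after the deformation, a short direct argument using (d) and $u_3 = v_3$ bounds $g$. For $M_3 \le M_2$: here $v_2 \le v_1$ and the dual deformation $v_2 \mapsto v_1$, combined with a simultaneous reduction of $u_2$ to preserve (c), increases $g$ (since $\partial g/\partial v_2 = u_3 \ge 0$), lands in the $M_2$ region, and yields the claim.

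The main obstacle is the case analysis inside $M_2 \le 5/4$: the extremal configuration is easy to guess, but ruling out every alternative pattern of active constraints requires careful bookkeeping (and uses Lemma \ref{max1} to restrict attention to the regime $u_1 + v_1 \ge 5/4$, $v_2 \ge 1/4$). The reductions for $M_1$ and $M_3$ are in spirit monotonicity arguments, but must be performed with attention to the box constraints (b) and the simultaneous effect of the deformation on (c), (f), and (\ref{convex2}), in particular the sub-case $u_3 > v_1$ of $M_1$ and the $u_2$-compensation step of $M_3$.
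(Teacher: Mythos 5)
Your overall architecture --- reduce $M_1$ and $M_3$ to $M_2$ by monotone deformations, then bound $M_2$ by a boundary analysis --- matches the paper's, and your main-case computation ($v_2=1$, $u_3=v_3=v_1$, (c) and (f) tight, maximizing $3/v_1-1/v_1^2-1$ at $v_1=2/3$) does recover the extremal configuration. However, the step $M_2\le 5/4$ is not actually proved. The assertion that ``the only upper bound on $v_2$ is $v_2\le 1$'' is false: constraint (c), $u_2v_1+u_1v_2\le 1$, bounds $v_2$ from above whenever $u_1>0$, and the compensating decrease of $u_2$ costs $g$ (since $\partial g/\partial u_2=1-v_3\ge 0$), so $v_2=1$ does not follow. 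Likewise, in the $M_2$ region the constraint is $v_3\le v_1$, so $v_1$ is an \emph{upper} bound for $v_3$; with $\partial g/\partial v_3=-u_2\le 0$ the gradient pushes $v_3$ away from $v_1$ toward $u_3$, with (f) as the obstruction (its left side grows as $v_3$ decreases), so your conclusion $v_3=v_1$ is based on a misreading of the constraint's direction. Finally, the claim that every other pattern of active constraints yields a strictly smaller value is false: $5/4$ is also attained with $\tilde\Delta(1,3,5)=\tilde\Delta(2,3,6)=1$, $v_3=u_3=1/2$ and $v_2=v_1=2/3<1$ (the paper's Case 2.3). Ruling out or computing each such face is exactly where the bulk of the paper's work lies (Lemma \ref{max2} together with Cases 2.1--2.3), and it is deferred rather than done in your proposal.

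The reductions also have gaps. For $M_3\le M_2$: raising $v_2$ to $v_1$ gains $u_3(v_1-v_2)$ in $g$, but the reduction of $u_2$ needed to restore (c) costs $(1-v_3)u_1(v_1-v_2)/v_1$, and the net change $\left(u_3-(1-v_3)u_1/v_1\right)(v_1-v_2)$ has no definite sign; so the deformation does not demonstrably increase $g$. The paper avoids this by instead lowering $v_1$ to $v_2$ via $(u_1,v_1)\mapsto(u_1+(1+u_2)d,\ v_1-v_2d)$, which preserves $\tilde\Delta(2,3,5)$ exactly, does not increase $\tilde\Delta(2,3,6)$, and increases $g$ by $(1+u_2-v_2)d\ge 0$; at $v_1=v_2$ constraint (e) becomes (c). For $M_1\le M_2$: your deformation $v_3\mapsto\max\{v_1,u_3\}$ fails to reach the $M_2$ region precisely when $u_3>v_1$, and the ``short direct argument'' promised there is not supplied. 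The paper sidesteps this sub-case entirely by raising $v_1$ to $v_3$ via $(u_1,v_1)\mapsto(u_1-u_3d,\ v_1+v_3d)$, which only requires verifying that (a) survives. As it stands, the proposal is a plausible outline whose three key steps each contain an unproven or incorrect assertion.
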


  \it Proof. \rm First we show that
 $M_1, M_3 \le\max\left\{ M_2,\ \dfrac{5}{4}\right\} $.  Let
  $$\ \begin{array}{rl}
   g_1(u_1,v_1,u_2,v_2,u_3,v_3)&
  =\tilde \Delta(1,3,5)= u_ 2 v_ 1  + u_ 1 v_2   \\&\\
   g_2(u_1,v_1,u_2,v_2,u_3,v_3)&=\tilde \Delta(1,3,6)=  u_ 3 v_1  + u_1 v_3  \\&\\
  g_3(u_1,v_1,u_2,v_2,u_3,v_3)&=\tilde \Delta(2,3,5)=v_ 1 - v_ 2+ u_ 2 v_ 1  + u_ 1 v_2   \\&\\
  g_4(u_1,v_1,u_2,v_2,u_3,v_3)&=\tilde \Delta(2,3,6)=v_1 - v_3+ u_ 3 v_1  + u_1 v_3\,.
  \end{array}$$
Suppose $M_1$ is attained at $P = (u_1,v_1,u_2,v_2,u_3,v_3)$
satisfying the constraints $v_1\le v_3 $,  (a), (b), (c), (d) and (\ref{convex2}).
Note that
  $$\begin{array}{rcl}
   g_1(u_1- u_3d,v_1+v_3d,u_2,v_2,u_3,v_3)&=&
   g_1(u_1,v_1,u_2,v_2,u_3,v_3)-(u_3v_2-u_2v_3)d \\
   &\le& g_1(u_1,v_1,u_2,v_2,u_3,v_3)\,,\\&&\\
   g_2(u_1 -u_3d,v_1+v_3d,u_2,v_2,u_3,v_3)&=&
   g_2(u_1,v_1,u_2,v_2,u_3,v_3)\,, \\&&\\
   g(u_1 -u_3d,v_1+v_3d,u_2,v_2,u_3,v_3)&=&g(u_1,v_1,u_2,v_2,u_3,v_3)+(v_3-u_3)d
   \\
   &\ge&  g(u_1,v_1,u_2,v_2,u_3,v_3)\,.\end{array}$$
If $v_1<v_3$, then we may let $d = (v_3-v_1)/v_3$ and
replace $(u_1,v_1)$ by $(u_1-u_3d,v_1+v_3d) = (\tilde u_1, v_3)$
with $\tilde u_1 = u_1 - u_3(v_3-v_1)/v_3$.
Then by the fact that $0 \le u_3 \le v_3 \le 1$,

$$\begin{array}{rl}
\tilde u_1&\ge u_1-(v_3-v_1)=u_1+v_1-v_3\ge 1-v_3\ge 0\\&\\
\tilde u_1+v_3&\ge u_1+v_1 \ge 1 \,.\end{array}$$
Thus, this replacement will neither decrease $M_1$ nor violate the constraints
(a), (b), (c), (d), (\ref{convex2}).
In that case, $P$ also satisfies (f). Therefore, $M_1\le M_2$.

\medskip
Suppose $M_3$ is attained at $P= (u_1,v_1,u_2,v_2,u_3,v_3)$
satisfying the constraints $v_2\le v_1$,   (a), (b), (e) and (f). We may assume that $M_3\ge \dfrac{5}{4}$. Then, by Lemma \ref{max1}, $v_2 \ge \dfrac{1}{4}$.
Note that
  $$\begin{array}{rl}
   g_3(u_1+(1+u_2)d,v_1- v_2d,u_2,v_2,u_3,v_3)&=g_3(u_1,v_1,u_2,v_2,u_3,v_3)\,,\\&\\
   g_4(u_1+(1+u_2)d,v_1- v_2d,u_2,v_2,u_3,v_3)&=g_4(u_1,v_1,u_2,v_2,u_3,v_3)-(v_2-v_3+u_3v_2-u_2v_3)d\\&\\
   &\le g_4(u_1,v_1,u_2,v_2,u_3,v_3)\,, \\&\\
  g(u_1+(1+u_2)d,v_1- v_2d,u_2,v_2,u_3,v_3)&=g(u_1,v_1,u_2,v_2,u_3,v_3)+(1+u_2-v_2)d\\&\\
   &\ge g(u_1,v_1,u_2,v_2,u_3,v_3)\,.\end{array}$$
If $v_1>v_2$, we   may let $d = (v_1-v_2)/v_2$ and replace $(u_1,v_1)$ by
$(u_1+(1+u_2)d, v_1-v_2d) = (\hat u_1, v_2)$ so that
$\hat u_1 = u_1+(1+u_2)d$.
Then

$$\begin{array}{rl}\hat u_1 &\ge u_1\ge 0 ,\\&\\
\hat u_1+v_2 &=u_1+\dfrac{(1+u_2)(v_1-v_2)}{v_2}+v_2 \\
& =u_1+v_1 +\dfrac{(1+u_2-v_2)(v_1-v_2)}{v_2}
\ge u_1+v_1
\ge 1.
\end{array}$$
Such a replacement will neither decrease $M_3$ nor violate the constraints
(a), (b), (f), and (\ref{convex2}). In that case,
$P$ also satisfies (c). Therefore, $M_3\le M_2$.

It remains to prove $M_2 \le   \dfrac{5}{4}$.
Note that we have relaxed the constraint (\ref{convex2}) in the
definition of $M_2$ to simplify the arguments in the following.
On the other hand, we cannot use the assumption that $P_1, \dots, P_6$
are the vertices of a convex polygon anymore.
To establish our result, We need one more lemma.

\begin{lemma}\label{max2}
$M_2$ is attained at some
$(u_1,v_1,u_2,v_2,u_3,v_3)$ satisfying one of the following conditions:
\begin{enumerate}
\item $v_1=v_2=v_3$.
\item $\tilde \Delta(1,3,5)=1$, $v_3=u_3$,  $\tilde \Delta(2,3,6)<1$ and
$v_3=v_1$.
\item $\tilde \Delta(1,3,5)=1$,  $v_3=u_3$ and $\tilde \Delta(2,3,6)=1$ .
\end{enumerate}
\end{lemma}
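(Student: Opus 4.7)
The plan is a variational argument on the objective $g := u_1 + u_2 + v_1 + u_3 v_2 - u_2 v_3 - 1$ over the compact feasible region defined by (a), (b), (c), (f), and $v_3 \le v_1 \le v_2$. Continuity yields a maximizer $P^* = (u_1,v_1,u_2,v_2,u_3,v_3)$. One computes $\partial g/\partial u_1 = \partial g/\partial v_1 = 1$, $\partial g/\partial u_2 = 1 - v_3 \ge 0$, $\partial g/\partial v_2 = u_3 \ge 0$, $\partial g/\partial u_3 = v_2 \ge 0$, and $\partial g/\partial v_3 = -u_2 \le 0$, so at $P^*$ each of $u_1,v_1,u_2,v_2,u_3$ ``wants'' to grow and $v_3$ ``wants'' to shrink.

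Next I would apply one-variable perturbations to read off the possible binding constraints at $P^*$. For instance, lowering $v_3 \to v_3 - d$ raises $g$ by $u_2 d$, tightens (f) by $(1-u_1)d$ and requires $v_3 - d \ge u_3$, so at $P^*$ one of $\{v_3 = u_3,\ \text{(f) tight},\ u_1 = 1,\ u_2 = 0\}$ must hold. Raising $v_1 \to v_1 + d$ yields $\{v_1 = v_2,\ \text{(c) tight},\ \text{(f) tight}\}$; raising $u_3 \to u_3 + d$ yields $\{u_3 = v_3,\ v_2 = 0,\ \text{(f) tight}\}$; and similarly for $u_1, u_2, v_2$. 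Intersecting these lists gives a small case tree whose leaves are candidates for $P^*$.

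The three conclusions of the lemma are three subsets of this tree. Roughly, Case 1 ($v_1 = v_2 = v_3$) is the ``flat'' branch where constraint (c) is strict and the upper bound on $v_1$ comes from $v_1 = v_2$ while the lower bound on $v_3$ comes from $v_3 = v_1$, collapsing the three heights. Cases 2 and 3 are the branches in which (c) is tight and $u_3 = v_3$, with (f) either strict (so that $v_3 = v_1$ is forced by the $v_1$-analysis, giving Case 2) or tight (Case 3). The remaining branches --- those involving degenerate alternatives such as $u_2 = 0$, $v_2 = 0$, or $u_1 = 1$ in isolation --- can be absorbed into one of the three cases by elementary deformations that neither decrease $g$ nor violate any constraint.

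The main obstacle is the scenario in which $u_3 < v_3$ at $P^*$ and every one-variable improving perturbation is blocked by a different constraint. For this I would use the two-parameter perturbation $(u_3, v_3) \to (u_3 + d, v_3 - d)$, which raises $g$ by $(u_2 + v_2)d \ge 0$ and changes the LHS of (f) by $(v_1 - u_1)d$. If $v_1 \ge u_1$ this perturbation strictly improves $g$ without violating any constraint, contradicting maximality; if $v_1 < u_1$, I would combine it with a compensating adjustment of $(u_1, u_2)$ chosen to preserve (c), and iterate until either $u_3 = v_3$ is forced (placing $P^*$ in Cases 2 or 3) or a strict increase in $g$ is produced.
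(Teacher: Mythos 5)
Your overall strategy---take a maximizer of $g$ on the compact feasible set and rule out configurations by admissible perturbations---is the same as the paper's, but the two places where you defer the work are exactly where the lemma's content lies, and the one concrete new perturbation you propose does not work. First, the step ``intersecting these lists gives a small case tree whose leaves are candidates'' followed by ``the remaining branches can be absorbed into one of the three cases by elementary deformations'' is an assertion, not an argument: your one-variable analysis yields disjunctions such as $\{v_3=u_3\}\cup\{\hbox{(f) tight}\}\cup\{u_2=0\}$ (note that $u_1=1$ is not a blocker---it makes the $v_3$-decrease \emph{feasible}), and nothing in the proposal shows that the surviving combinations collapse to precisely the three stated conditions; in particular you never establish that $v_2>v_3$ forces $\tilde\Delta(1,3,5)=1$, nor that one may additionally arrange $u_3=v_3$. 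Second, your resolution of the ``main obstacle'' is miscalculated: under $(u_3,v_3)\to(u_3+d,\,v_3-d)$ the left side of (f), namely $v_1-v_3+u_3v_1+u_1v_3$, changes by $(1+v_1-u_1)d$, not $(v_1-u_1)d$; this is strictly positive for every admissible point, so the move always tightens (f) and is blocked whenever (f) is active. Even granting your formula, the logic is inverted: $v_1\ge u_1$ would make the left side of (f) \emph{increase}, which is the bad direction. The closing sentence about ``a compensating adjustment of $(u_1,u_2)$ chosen to preserve (c)'' does not address (f) at all (changing $u_1$ perturbs (f) through the term $u_1v_3$), so the obstacle remains.

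For comparison, the paper resolves these points with two-parameter perturbations chosen to leave $g_1$ and $g_4$ \emph{exactly} invariant, applied in a specific order. Assuming $v_2>v_3$: (i) $(u_2,v_2)\to(u_2+d,v_2+e)$ leaves (f) untouched and strictly increases $g$ unless (c) is tight, giving $\tilde\Delta(1,3,5)=1$; (ii) $(u_3,v_3)\to\bigl(u_3-(1-u_1)d,\,v_3-v_1d\bigr)$ (geometrically, moving $P_6$ parallel to $\overline{P_2P_3}$) preserves both (c) and (f), changes $g$ by $(\tilde\Delta(1,3,5)-v_2)d=(1-v_2)d\ge 0$, and strictly decreases $v_3-u_3$ because $u_1+v_1>1$, so one may assume $u_3=v_3$; (iii) after checking $v_2>u_2$ (else $g<5/4$ directly), the perturbation $(u_3,v_3)\to(u_3+d,v_3+d)$ preserves (c), increases $g$ by $(v_2-u_2)d>0$, and is blocked only if (f) is tight or $v_3=v_1$, yielding conditions 2 and 3. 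You would need to supply perturbations with these invariance properties---or an equivalent device---to close the gaps.
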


\it Proof. \rm Suppose $M_2$ is attained at some $(u_1,v_1,u_2,v_2,u_3,v_3)$
satisfying  $v_3\le v_1\le v_2$,  (a), (b), (c)   and   (f).
If $v_2=v_3$, then $v_1=v_2=v_3$.

Suppose $v_2>v_3$.  We first show that $\tilde \Delta(1,3,5) = 1$.
Assume that $\tilde \Delta(1,3,5)<1$.
Note that
  $$\begin{array}{rl}
   g_1(u_1,v_1,u_2+d,v_2+e,u_3,v_3)&=g_1(u_1,v_1,u_2,v_2,u_3,v_3)+v_1d+  u_1e ,\\&\\
   g_4(u_1,v_1,u_2+d,v_2+e,u_3,v_3)&=g_4(u_1,v_1,u_2,v_2,u_3,v_3)\,,\\&\\
   g(u_1,v_1,u_2+d,v_2+e,u_3,v_3)&=g(u_1,v_1,u_2,v_2,u_3,v_3)+(1-v_3)d+u_3e \,.\end{array}$$
Then we can do the following to increase $g$
to derive a contradiction. (1) If $v_2 < 1$, then take a suitable $d=e>0$.
(2) If $v_2 = 1$, then
$\Delta(1,3,5) = u_1v_2+u_2v_1 < 1$ implies that $u_2 < 1$ as $u_1+v_1 \ge 1$.
We may let $d > 0 = e$.

\medskip
Next, we show that we may assume that $v_3 = u_3$. Note that
  $$\begin{array}{rcl}
   g_1(u_1,v_1,u_2 ,v_2 ,u_3-(1-u_1)d,v_3-v_1d)&=&g_1(u_1,v_1,u_2,v_2,u_3,v_3)\,,\\&\\
   g_4(u_1,v_1,u_2 ,v_2 ,u_3-(1-u_1)d,v_3-v_1d)&=&g_4(u_1,v_1,u_2,v_2,u_3,v_3)\,,\\&\\
   g(u_1,v_1,u_2 ,v_2 ,u_3-(1-u_1)d,v_3-v_1d)&=&g(u_1,v_1,u_2,v_2,u_3,v_3)+(1-v_2)d \,.
   \end{array}$$
Since $u_1+v_1>1$, we may decrease $v_3-u_3$ without decreasing $g$.
Hence, we may assume that $v_3=u_3$.

We further claim that $v_2>u_2$. If it is not true and
$v_2=u_2$. Then $\tilde \Delta(1,3,5)= (v_1+u_1)u_2 =1$, and
$1+u_2 = 1+v_2 \ge u_1 + v_1 = 1/u_2$
so that $1+u_2 \ge 1/u_2 \ge 0$.
Hence  $u_2 \in [(\sqrt 5 - 1)/2,1]$, and
$$g(u_1, \dots, v_3) = 1/u_2 + u_2 - 1 < 5/4 \qquad \hbox{ for }
u_2 \in [(\sqrt 5 -1)/2,1],$$
which is a contradiction.

\medskip
Now, we can show that $\tilde \Delta(2,3,6) = 1$ or $v_3 = v_1$.
Note that
  $$\begin{array}{rcl}
   g_1(u_1,v_1,u_2 ,v_2 ,u_3+d,v_3+d)&=&g_1(u_1,v_1,u_2,v_2,u_3,v_3)\,,\\&\\
   g_4(u_1,v_1,u_2 ,v_2 ,u_3+d,v_3+d)&=&g_4(u_1,v_1,u_2,v_2,u_3,v_3)+(u_1+v_1 - 1 )d,
   \\&\\
   g(u_1,v_1,u_2 ,v_2 ,u_3+d,v_3+d)&=&g(u_1,v_1,u_2,v_2,u_3,v_3)+(v_2-u_2)d \,.
   \end{array}$$
Suppose $\tilde \Delta(2,3,6)<1$.
If  $v_3<v_1$, then we can increase $g$ by choosing $d >0$, a contradiction.
So we have  $v_3=v_1$.
\qed

\medskip\noindent
{\bf Now we can finish the proof of Proposition \ref{M123}.}

Suppose $(u_1,v_1,u_2,v_2,u_3,v_3)$ satisfies  $v_3\le v_1\le v_2$,
(a), (b), (c), (f)  and one of the conditions in Lemma \ref{max2},
we will show that
$ g(u_1,v_1,u_2,v_2,u_3,v_3)\le \dfrac{5}{4}$ according to the three
conditions.

\noindent{\bf Case 2.1} Suppose $v_1=v_2=v_3=v$. Then we have
$$\tilde \Delta(1,3,5)=(u_1 + u_2) v \,, \qquad
\tilde \Delta(2,4,6) =(u_1 + u_3) v \,,$$
$$g(u_1,v_1,u_2,v_2,u_3,v_3) =   u_1 + u_2(1-v) + v   + u_3 v-1.$$
We need to maximize $g(u_1,v_1,u_2,v_2,u_3,v_3)$ subject to the constraints:
$$\begin{array}{rcl}
(u_1 + u_2) v\le 1&\Lra& u_2\le \dfrac{1-u_1}{v},\\&\\
(u_1 + u_3) v\le 1&\Lra& u_3\le \dfrac{1-u_1}{v},\end{array}$$
and
$$\dfrac{5}{4}\le u_1+v_1\le 2,\ 0\le  u_2,\ u_3\le v\le 1.$$
Because
$\(v-\dfrac{1}{2}\)^2\ge 0$, it follows that
$v^2\ge v-\dfrac{1}{4}\ge 1-u_1$, and hence
$1\ge \dfrac{1-u_1}{v^2}$.
Therefore, the maximum of $ g(u_1,v_1,u_2,v_2,u_3,v_3)$ occurs at
$u_2=u_3=\dfrac{1-u_1}{v}$. Then
$$ g(u_1,v_1,u_2,v_2,u_3,v_3)=u_1+v+\dfrac{1-u_1}{v}-1=h(u_1,v).$$
Since $\dfrac{\partial h}{\partial v}=1-\dfrac{1-u_1}{v^2}\ge 0$,
the maximum of $h$ occurs at $v=1$, which gives $h(u_1,1)=1< \dfrac{5}{4}$.

\medskip
\noindent{\bf Case 2.2} Suppose $\tilde \Delta(1,3,5)=1$,   $v_3=u_3 =v_1=v$.
Then we have
$$\tilde \Delta(1,3,5)=u_2 v + u_1 v_2 = 1\Ra u_2 = \dfrac{(1 - u_1 v_2)}{v}\, $$
and
$$ g(u_1,v_1,u_2,v_2,u_3,v_3)=(u_1+v)(1+v_2)+\dfrac{1-u_1v_2}{v}-2=k(u_1,v_2,v).$$
So we want to maximize $k(u_1,v_2,v)$ subject to
$$\tilde \Delta(2,3,6)= v(u_1+v)\le 1,\quad
\dfrac{1}4\le \dfrac{5}4-u_1\le v \le v_2\le 1,\quad
\dfrac{1-u_1v_2}{v}\le v_2.$$
Equivalently,
$$\dfrac{1}4\le \dfrac{5}4-u_1\le v \le \dfrac{1}{v+u_1}\le v_2\le 1.
$$
Note that $\dfrac{\partial k}{\partial v_2}=v-u_1\(\dfrac{1}{v}-1\)$.

Suppose $\dfrac{\partial k}{\partial v_2}\ge 0$, i.e., $u_1\le \dfrac{v^2}{1-v}$.
Then the maximum of
$k$ occurs at $v_2=1$ so that
$$k(u_1,1,v)=2 u_1 + \dfrac{(1 - u_1)}{v} + 2 v-2.$$
Elementary calculus shows that
the maximum of $2 u_1 + \dfrac{(1 - u_1)}{v} + 2 v-2$ with
$$\dfrac{1}4\le \dfrac{5}4-u_1\le v\le \dfrac{1}{v+u_1}\le 1,\quad u_1
\le \dfrac{v^2}{1-v}$$
occurs at $v=\dfrac{2}3,\ u_1
=\dfrac{5}6$ and $k(\dfrac{5}6,1,\dfrac{2}3)=\dfrac{5}4$.

Suppose $\dfrac{\partial k}{\partial v_2} < 0$, i.e., $u_1 < \dfrac{v^2}{1-v}$.
Then the maximum of $k$ occurs at
$v_2=\dfrac{1}{(u_1+v)}$
so that
$$k(u_1,\dfrac{1}{(u_1+v)},v)=u_1+v+\dfrac{1}{(u_1+v)}-1.$$
Direct calculation shows
that the maximum of $u_1+v+\dfrac{1}{(u_1+v)}-1$ in
$$\dfrac{1}4\le \dfrac{5}4-u_1
\le v\le \dfrac{1}{v+u_1}\le 1,\ u_1\ge \dfrac{v^2}{1-v}$$
occurs at $u_1=1,\ v=\dfrac{\sqrt{5}-1}2$, which gives $v_2=\dfrac{\sqrt{5}-1}2$ and
$k(1, \dfrac{\sqrt{5}-1}2 , \dfrac{\sqrt{5}-1}2 )=\sqrt{5}-1<\dfrac{5}4$.

\medskip
\noindent{\bf Case 2.3}  $\tilde \Delta(1,3,5)=1$, $\tilde \Delta(2,3,6)=1$  and    $v_3=u_3$.
Then we have
$$u_1 = \dfrac{(1 - v_1 + v_3 - v_1 v_3)}{v_3},\ u_2 =\dfrac{v_1 v_2 + v_3 +v_1 v_2 v_3-
  v_2 -   v_2 v_3 }{ v_1 v_3},$$
and
$$ g(u_1,v_1,u_2,v_2,u_3,v_3)=\dfrac{(1 - v_1) (v_1 - v_2)
+ v_3 + (v_2-1 ) v_3^2}{v_1 v_3}=\ell(v_1, v_2, v_3).$$
So we want to maximize $\ell(v_1, v_2, v_3)$ subject to
$$\dfrac{1}4\le
\dfrac{5}4-\dfrac{(1 - v_1 + v_3 - v_1 v_3)}{v_3}
\le v_1\le v_2\le 1,\quad
\dfrac{v_1 v_2 + v_3 +v_1 v_2 v_3-
v_2 -   v_2 v_3 }{ v_1 v_3}\le v_2\le 1.$$
Equivalently,
$$v_1\le  v_2\le   1,\quad \dfrac{1}{1+v_3}\le v_1\le \dfrac{4-v_3}4.
$$
Note that $\dfrac{\partial \ell}{\partial v_2}= \dfrac{v_1+v_3^2-1}{v_1v_3}$.

Suppose $v_1+v_3^2\ge 1$. The
maximum of $\ell$ occurs at $v_2=1$ so that
$\ell(v_1,1,v_3)=  \dfrac{v_3-(1 - v_1)^2}{v_1v_3} $.
Direct calculation shows
that the maximum of $\dfrac{v_3-(1 - v_1)^2}{v_1v_3}$  with
$$v_1\le  v_2\le   1,\quad
\dfrac{1}{1+v_3}\le v_1\le \dfrac{4-v_3}4,\quad
v_1+v_3^2\ge 1$$
occurs at $v_1= v_3=\dfrac{2}3$ and $\ell(\dfrac{2}3,1,\dfrac{2}3)=\dfrac{5}4$.

Suppose $v_1+v_3^2 < 1$. The maximum of $k$ occurs at $v_2=v_1$ so that
$\ell(v_1,v_1,v_3)= \dfrac{1-(1-v_1)v_3}{v_1}$. Direct calculation
shows that the maximum of $\dfrac{1-(1-v_1)v_3}{v_1}$ in
$$v_1\le  v_2\le   1,\ \dfrac{1}{1+v_3}\le v_1\le \dfrac{4-v_3}4,\ v_1+v_3^2\le 1$$
occurs at $v_1= \dfrac{2}3,\ v_3=\dfrac{1}{2}$ and $\ell(\dfrac{2}3, \dfrac{2}3,\dfrac{1}{2})=\dfrac{5}4$. \qed

\bigskip
\noindent
{\bf Remarks} Several comments related to Proposition \ref{Hexagon} are in order.

\begin{enumerate}

\item The proof of Proposition \ref{Hexagon} is direct but quite lengthy.
A shorter proof is desirable.

\item
One might expect that a symmetry argument can be used to
show that the solution of Proposition \ref{Hexagon}
is attained at a regular hexagon by a suitable
affine transform when $n = 6$, but it is not the case as shown by
our result.

\item One may generalize Proposition \ref{Hexagon} and determine
the optimal bound of the ratio between the area
of an $n$-sided convex polygon and the area of an maximal
$m$-sided polygon in it for $m < n$.

\end{enumerate}

\bigskip\noindent
{\bf Acknowledgment}

The research of Li and Poon
was supported by USA NSF, and HK RGC.
Li  was an honorary professor of the Shanghai University,
and an honorary professor of the University of Hong Kong.
The research of Wang was done while he was visiting the College of William and
Mary during the academic
year 2013-14 under the support of China Scholarship Council.

\end{document}